\documentclass[12pt]{amsart}
\usepackage[english]{babel}
% \usepackage[letterpaper,top=2cm,bottom=2cm,left=3cm,right=3cm,marginparwidth=1.75cm]{geometry}

% Useful packages
\usepackage{amsmath}
\usepackage{graphicx}
\usepackage[colorlinks=true, allcolors=blue]{hyperref}
\usepackage[numbers]{natbib}
\usepackage{enumerate}
\usepackage{amsthm,amssymb,amsfonts,mathrsfs,xfrac,faktor,esint}
\usepackage{tikz-cd}
\usepackage{dsfont}
\usepackage{multirow}
\usepackage{tabularx}
\usepackage{xcolor}

\newtheorem{theorem}{Theorem}[section]
\newtheorem{coro}[theorem]{Corollary}
\newtheorem{lemma}[theorem]{Lemma}

\theoremstyle{definition}

\theoremstyle{remark}
\newtheorem{remark}[theorem]{Remark}

\usepackage{forloop}
\newcommand{\defcal}[1]{\expandafter\newcommand\csname 
	c#1\endcsname{{\mathcal{#1}}}}
\newcommand{\defbb}[1]{\expandafter\newcommand\csname 
	b#1\endcsname{{\mathds{#1}}}}
\newcommand{\defbf}[1]{\expandafter\newcommand\csname 
	bf#1\endcsname{{\mathbf{#1}}}}
\newcommand{\deffr}[1]{\expandafter\newcommand\csname 
	frak#1\endcsname{{\mathfrak{#1}}}}
\newcommand{\defov}[1]{\expandafter\newcommand\csname 
	ov#1\endcsname{{\overline{#1}}}}
\newcommand{\deftil}[1]{\expandafter\newcommand\csname 
	til#1\endcsname{{\widetilde{#1}}}}
\newcommand{\defhat}[1]{\expandafter\newcommand\csname 
	hat#1\endcsname{{\widehat{#1}}}}
\newcommand{\defscr}[1]{\expandafter\newcommand\csname 
	scr#1\endcsname{{\mathscr{#1}}}}
\newcommand{\deftt}[1]{\expandafter\newcommand\csname 
	tt#1\endcsname{{\mathtt{#1}}}}
\newcommand{\deful}[1]{\expandafter\newcommand\csname 
	ul#1\endcsname{{\underline{#1}}}}
\newcommand{\defas}[1]{\expandafter\newcommand\csname 
	'#1\endcsname{{\'{#1}}}}
\newcommand{\defde}[1]{\expandafter\newcommand\csname 
	`#1\endcsname{{\`{#1}}}}
\newcommand{\defpl}[1]{\expandafter\newcommand\csname 
	^#1\endcsname{{\^{#1}}}}
\newcommand{\defrm}[1]{\expandafter\newcommand\csname 
	rm#1\endcsname{{\mathrm{#1}}}}
\newcommand{\defdot}[1]{\expandafter\newcommand\csname 
	Dot#1\endcsname{{\Dot{#1}}}}
\newcommand{\defvec}[1]{\expandafter\newcommand\csname 
	Vec#1\endcsname{{\overrightarrow{#1}}}}

\newcounter{calBbCounter}
\forLoop{1}{26}{calBbCounter}{
	\edef\letter{\Alph{calBbCounter}}
	\expandafter\defcal\letter
	\expandafter\defbb\letter
	\expandafter\defbf\letter
	\expandafter\deffr\letter
	\expandafter\defov\letter
	\expandafter\deftil\letter
	\expandafter\defhat\letter
	\expandafter\defscr\letter
	\expandafter\deftt\letter
	\expandafter\deful\letter
	\expandafter\defas\letter
	\expandafter\defde\letter
	\expandafter\defpl\letter
	\expandafter\defrm\letter
    \expandafter\defdot\letter
    \expandafter\defvec\letter
}
\forLoop{1}{26}{calBbCounter}{
	\edef\letter{\alph{calBbCounter}}
	\expandafter\defbf\letter
	\expandafter\deffr\letter
	\expandafter\defov\letter
	\expandafter\deftil\letter
	\expandafter\defhat\letter
	\expandafter\defscr\letter
	\expandafter\deftt\letter
	\expandafter\deful\letter
	\expandafter\defas\letter
	\expandafter\defde\letter
	\expandafter\defpl\letter
	\expandafter\defrm\letter
    \expandafter\defdot\letter
    \expandafter\defvec\letter
}

% General math operators

% Geometry

\DeclareMathOperator{\dist}{dist}

% Functions
\DeclareMathOperator{\Div}{div}
\DeclareMathOperator{\supp}{supp}

\newcommand{\Cc}{C^\infty_c}

% Matrices

% Microlocal analysis

% Harmonic analysis

\DeclareMathOperator{\bmo}{BMO}

\title{On representation of solutions to the heat equation}
\author{Pascal Auscher}
\address{Universit{\'e} Paris-Saclay, CNRS, Laboratoire de Math\'{e}matiques d'Orsay, 91405 Orsay, France}
\email{pascal.auscher@universite-paris-saclay.fr}

\author{Hedong Hou}
\address{Universit{\'e} Paris-Saclay, CNRS, Laboratoire de Math\'{e}matiques d'Orsay, 91405 Orsay, France}
\email{hedong.hou@universite-paris-saclay.fr}

\date{October 26, 2023}
\subjclass{35K05}
\keywords{Heat equation, representation of solutions, uniqueness}

\begin{document}
\maketitle

\begin{abstract}
    We propose a simple method to obtain semigroup representation of solutions to the heat equation using a local $L^2$ condition with prescribed growth and a boundedness condition within tempered distributions. This applies to many functional settings and, as an example, we consider the Koch and Tataru space related to $\bmo^{-1}$ initial data.
\end{abstract}
\section{Introduction}

The purpose of this note is to investigate representation for  solutions to the heat equation 
\begin{equation}
    \partial_t u - \Delta u = 0
    \label{e:heat}
\end{equation}
on the upper-half space $\bR^{1+n}_+:=(0,\infty) \times \bR^n$ or on a strip $(0,T) \times \bR^n$. That is, when can we assert that $u$ can be represented by the heat semigroup acting on a data, \textit{i.e.}, 
\begin{equation}
    u(t):=u(t,\cdot)=e^{t\Delta} u_0
    \label{e:representation}
\end{equation}
for some $u_0$ and all $t \in (0,T)$? 

The topic is not new, of course, so let us first briefly comment on some classical results in the literature.

The most general framework for such a representation is via tempered distributions. More precisely, given $u_0 \in \scrS'(\bR^n)$, then $t \mapsto e^{t\Delta} u_0$ lies in $C^\infty([0,\infty); \scrS'(\bR^n))$. Conversely, it has been shown in \cite[Chap.~3, Prop.~5.1]{Taylor2011PDE} that any $u \in C^\infty([0,\infty); \scrS'(\bR^n))$ solving the heat equation is represented by the heat semigroup applied to its initial value. Certainly, the argument still works in $C^1((0,\infty);\scrS'(\bR^n)) \cap C([0,\infty);\scrS'(\bR^n))$, which seems to close the topic. But it uses Fourier transform, so it is not transposable to more general equations (\textit{e.g.}, parabolic equations with coefficients). Thus, one may wonder whether different concrete knowledge, like a growth condition, on the solution could lead to a representation, not using Fourier transform. Yet, one can observe that growth exceeding the inverse of a Gaussian when $|x|\to\infty$ is forbidden for the representation.

Another framework is that of non-negative solutions. A classical result by D. Widder \cite[Thm.~6]{Widder1944Positive} shows that in one-dimensional case, any non-negative $C^2$-solution $u$ in the strip must be of the form \eqref{e:representation} for some  non-negative Borel measure $u_0$. It has been generalized to higher dimensions and classical solutions of parabolic equations with smooth coefficients by M. Krzyzanski \cite{Krzyzanski1964_Non-negative_nD}, via internal representation and a limiting argument. We are also going to use this idea below, but we want to remove the sign condition. D.G. Aronson later extended it to non-negative weak solutions of real parabolic equations, see \cite[Thm.~11]{Aronson1968Non-negative}.

Next, the uniqueness problem is tied with representation but they are different issues. For instance, let us mention two works giving sufficient criteria on strips for uniqueness, one by S. T\"acklind \cite{Tacklind1936_Class} providing the optimal pointwise growth condition, and the other by A. Gushchin \cite{Gushchin1984_Class} providing a local $L^2$ condition with prescribed growth, also optimal but more amenable to more general equations. In these results, the growth can be faster than the inverse of a Gaussian when $|x|\to\infty$, which hence excludes usage of tempered distributions, so uniqueness can hold without being able to represent general solutions. 

With these observations in mind, it seems that we have two very different theories to approach representation (and uniqueness): one only using distributions and Fourier transform; one not using them at all. The goal of this note is to make a bridge between them, \textit{i.e.}, to obtain tempered distributions, not just measurable functions or measures, as initial data from local integrability conditions. Such conditions may only include  integrability conditions in the interior, completed by a uniform control.

Let us state our result. A sequence $(T_k)$ of tempered distributions is  \textit{bounded} if $(\langle T_k,\varphi \rangle)$ is bounded for any $\varphi \in \scrS(\bR^n)$. Recall that any distributional solution to the heat equation on strips is in fact smooth by hypoellipticity, see for instance \cite[\S4.4]{Hormander2003_PdOI}.
\begin{theorem}
    \label{thm:rep} 
    Let $0<T \le \infty$. Let $u \in \scrD'((0,T)\times \bR^n)$ be a distributional solution to the heat equation. Suppose that:
    \begin{enumerate}[(i)]
        \item 
        \label{item:cond_size}
        (Size condition) For $0<a<b<T$, there exist $C(a,b)>0$ and $0<\gamma<1/4$ such that for any $R>0$,
        \begin{equation}
            \left( \int_a^b \int_{B(0,R)} |u(t,x)|^2 dtdx \right)^{1/2} \leq C(a,b) \exp\left( \frac{\gamma R^2}{b-a} \right);
            \label{e:L2condition}
        \end{equation}
        
        \item 
        \label{item:cond_initial}
        (Uniform control) There exists a sequence $(t_k)$ tending to 0 such that $(u(t_k))$ is bounded in $\scrS'(\bR^n)$.
    \end{enumerate}
    Then there exists a unique $u_0 \in \scrS'(\bR^n)$ so that $u(t) = e^{t\Delta}u_0$ for all $0<t<T$, where the heat semigroup is understood in the sense of tempered distributions.
\end{theorem}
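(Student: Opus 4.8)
The plan is to upgrade the size condition \eqref{e:L2condition} to pointwise Gaussian bounds via interior regularity, to identify $u(t)$ with $e^{(t-s)\Delta}u(s)$ for $0<s<t<T$ by an internal representation in the spirit of Krzyzanski, and then to use the uniform control \eqref{item:cond_initial} together with weak-$*$ compactness in $\scrS'(\bR^n)$ to let $s\to 0$. Uniqueness is immediate: if $e^{t\Delta}u_0=e^{t\Delta}u_0'$ for all $t\in(0,T)$, then letting $t\to 0^+$ and using $e^{t\Delta}w\to w$ in $\scrS'(\bR^n)$ for every $w\in\scrS'(\bR^n)$ gives $u_0=u_0'$. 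For existence, the first step is regularity: by hypoellipticity $u\in C^\infty((0,T)\times\bR^n)$, and the standard interior estimate for the heat equation controls $|u(t,x)|$ and $|\nabla u(t,x)|$ by the $L^2$ norm of $u$ over a fixed-scale parabolic cylinder centred at $(t,x)$; inserting \eqref{e:L2condition}, and absorbing into the exponent the passage from balls to points and from $[a,b]$ to a slightly larger interval (harmless because $\gamma<1/4$), one obtains that for every $[a,b]\subset(0,T)$ and every $\gamma'\in(\gamma,1/4)$ there is $C=C(a,b,\gamma')$ with $|u(t,x)|+|\nabla u(t,x)|\le C\exp\!\big(\gamma'|x|^2/(b-a)\big)$ on $[a,b]\times\bR^n$.

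Next, fix $0<s<T$. Choosing $a,b$ with $s\in(a,b)$ and $b-a$ as close to $T$ as we wish (arbitrarily large if $T=\infty$), the previous bound shows $|u(s,\cdot)|$ grows slower than $e^{\gamma'|y|^2/(b-a)}$, and since $\gamma'<1/4$ the Gauss--Weierstrass integral
\[
v_s(t,x):=\int_{\bR^n}G_{t-s}(x-y)\,u(s,y)\,dy,\qquad G_\tau(x)=(4\pi\tau)^{-n/2}e^{-|x|^2/4\tau},
\]
converges absolutely for all $t\in(s,T)$. I claim $u(t)=v_s(t)$ on $(s,T)$. To see this, apply Green's identity on $(s,t-\varepsilon)\times B(0,R)$ to $u$ and the backward heat kernel $\Phi(\tau,y)=(4\pi(t-\tau))^{-n/2}e^{-|x-y|^2/4(t-\tau)}$ (so that $\partial_\tau\Phi+\Delta_y\Phi=0$), let $\varepsilon\to 0$ to recover $u(t,x)$ from the approximate identity, and obtain
\[
u(t,x)=\int_{B(0,R)}G_{t-s}(x-y)u(s,y)\,dy+\int_s^t\!\!\int_{\partial B(0,R)}\big(\Phi\,\partial_\nu u-u\,\partial_\nu\Phi\big)\,dS\,d\tau .
\]
On $\partial B(0,R)\times(s,t)$ one has $|\Phi|+|\nabla\Phi|\lesssim\mathrm{poly}(R)\,e^{-R^2/4(t-s)}$, while $|u|+|\nabla u|\lesssim\mathrm{poly}(R)\,e^{\gamma'R^2/(t-s+2\delta)}$ for a small $\delta>0$ by the regularity bound; since $(4\gamma'-1)(t-s)<0<2\delta$, the product has a strictly negative quadratic exponent, so the boundary term tends to $0$ and the interior integral to $v_s(t,x)$ as $R\to\infty$. (Alternatively one could compare $u$ and $v_s$ by the cited uniqueness theorems on short sub-strips and propagate by the semigroup law, but the internal representation treats all of $(s,T)$ at one stroke.)

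Finally, the heat semigroup maps $\scrS'(\bR^n)$ into itself (on the Fourier side it is multiplication by $e^{-\sigma|\xi|^2}\in\cO_M$), so by \eqref{item:cond_initial} and the previous step $u(t)=e^{(t-t_k)\Delta}u(t_k)\in\scrS'(\bR^n)$ whenever $t_k<t<T$, this expression agreeing with the Gauss--Weierstrass integral $v_{t_k}(t)$; in particular every slice $u(t)$ is a tempered distribution. The bounded sequence $(u(t_k))$ is equicontinuous (Banach--Steinhaus on the Fréchet space $\scrS(\bR^n)$), hence, $\scrS(\bR^n)$ being separable, admits a weak-$*$ convergent subsequence $u(t_{k_j})\rightharpoonup u_0$ in $\scrS'(\bR^n)$. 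Fix $t\in(0,T)$ and $\varphi\in\scrS(\bR^n)$; for $j$ large, $t_{k_j}<t$, and by self-adjointness of the heat semigroup $\langle u(t),\varphi\rangle=\langle u(t_{k_j}),e^{(t-t_{k_j})\Delta}\varphi\rangle$. Since $e^{(t-t_{k_j})\Delta}\varphi\to e^{t\Delta}\varphi$ in $\scrS(\bR^n)$ while $(u(t_{k_j}))$ is weak-$*$ convergent and equicontinuous, the right-hand side converges to $\langle u_0,e^{t\Delta}\varphi\rangle=\langle e^{t\Delta}u_0,\varphi\rangle$. Hence $u(t)=e^{t\Delta}u_0$ for every $t\in(0,T)$, and $u_0$ is unique by the first paragraph.

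The main obstacle is the internal representation: organising the Green identity on the growing cylinders and, crucially, checking that the boundary flux vanishes. This is exactly where the interior gradient bound and the sharp threshold $\gamma<1/4$ must cooperate, the backward-kernel decay $e^{-R^2/4(t-s)}$ having just enough margin over the admissible growth $e^{\gamma R^2/(t-s)}$ of $u$ once the time interval is slightly shrunk. The regularity step and the final weak-$*$ argument are comparatively routine.
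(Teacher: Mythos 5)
Your proposal is correct, and its overall architecture matches the paper's: prove an internal representation $u(t)=e^{(t-s)\Delta}u(s)$ for $0<s<t<T$ via Green's identity on expanding cylinders, then use Banach--Steinhaus equicontinuity and weak-$*$ sequential compactness of the bounded sequence $(u(t_k))$ in $\scrS'(\bR^n)$ to let $s=t_{k_j}\to 0$, with uniqueness coming from $e^{t\Delta}w\to w$ in $\scrS'(\bR^n)$. Where you genuinely diverge is in the proof of the key internal-representation step (the paper's Lemma \ref{lemma:homo-id}). You first upgrade \eqref{e:L2condition} to pointwise Gaussian bounds on $|u|+|\nabla u|$ by interior parabolic regularity, then run Green's identity against the backward heat kernel itself, killing the boundary flux for \emph{every} radius $R\to\infty$ thanks to the margin $(4\gamma'-1)(t-s)<2\delta$; this is the classical Widder--Krzyzanski internal representation and it directly yields the pointwise convolution formula. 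The paper instead tests against $h\in\Cc(\bR^n)$ flowed backward by the semigroup, controls $\nabla u$ only in $L^2$ on annuli via Caccioppoli's inequality, and kills the flux only along a well-chosen sequence of radii (by showing an integrated flux quantity is bounded). Your route is more elementary and transparent for the heat equation, but it leans on interior sup-estimates and the explicit Gaussian kernel, which is precisely what the paper avoids so that the same proof transfers to parabolic equations with bounded measurable (possibly complex) coefficients, where pointwise interior bounds and Gaussian kernel estimates are unavailable; the authors' $L^2$-only argument buys that generality, and also literally avoids the Fourier transform you invoke for the action of $e^{\sigma\Delta}$ on $\scrS'(\bR^n)$. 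One point you should spell out (the paper is equally brisk about it) is the identification of the pointwise Gauss--Weierstrass integral $v_{t_k}(t)$ with the distributional semigroup $e^{(t-t_k)\Delta}u(t_k)$: since $u(t_k,\cdot)$ may have Gaussian growth, $\int u(t_k)\varphi$ need not converge for all $\varphi\in\scrS(\bR^n)$, so one equates the two by testing against $C_c^\infty$, using Fubini with the absolutely convergent kernel integral and a truncation $\chi_R\psi\to\psi$ in $\scrS(\bR^n)$; this is routine but worth a line.
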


Let us first give some remarks. The $L^2$ condition \eqref{item:cond_size} is only assumed on interior strips, and its growth is in the order of the inverse of a Gaussian. It can be proved that this condition alone implies that $u(t)$ is a tempered distribution for each $t>0$. The precise behavior of $C(a,b)$ is not required, and it could blow up as $a\to 0$. Condition \eqref{item:cond_initial} provides us with necessary uniformity to define the initial data, which \textit{a priori} does not follow from \eqref{item:cond_size}.

The conclusion also shows that $u$ has a natural extension to a solution of the heat equation on $(0,\infty)\times \bR^n$ if $T<\infty$. And if $T$ was already $\infty$, then we could get a control on $u$ when $t\to \infty$ via any knowledge we might get on $u_0$, \textit{e.g.}, if $u_0\in L^2(\bR^n)$ then $u(t)$ is bounded in $L^2(\bR^n)$.

As an interesting remark, the argument is not using Fourier transform at all, and it indeed extends with the same method of proof to more general parabolic equations with bounded measurable, real or complex coefficients at the expense of working in more restrictive spaces than $\scrS'(\bR^n)$. The reader can refer to \cite{Auscher-Monniaux-Portal2019existence}, \cite{Zaton2020}, and \cite{Auscher-Hou2023_SIO} for the proof and applications of the general result in the context of measurable initial data. Applications towards distributional data for more general equations will appear in the forthcoming work \cite{Auscher-Hou2023_HCLions}.

The consequence for uniqueness is as follows.
\begin{coro}
    \label{cor:unique} 
    Let $0<T \le \infty$. Let $u \in \scrD'((0,T)\times \bR^n)$ be a distributional solution to the heat equation. Suppose that \eqref{item:cond_size} holds and that there exists a sequence $(t_k)$ tending to 0 such that $(u(t_k))$ converges to 0 in $\scrS'(\bR^n)$. Then $u=0$.
\end{coro}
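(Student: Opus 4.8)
The plan is to obtain Corollary \ref{cor:unique} as an almost immediate consequence of Theorem \ref{thm:rep}. First I would note that a sequence which converges in $\scrS'(\bR^n)$ is in particular bounded there: if $(u(t_k))$ converges to $0$ in $\scrS'(\bR^n)$, then for every $\varphi \in \scrS(\bR^n)$ the scalars $\langle u(t_k),\varphi\rangle$ converge, hence form a bounded sequence. So hypothesis \eqref{item:cond_initial} of Theorem \ref{thm:rep} is met, and since \eqref{item:cond_size} is assumed, the theorem provides a unique $u_0 \in \scrS'(\bR^n)$ with $u(t) = e^{t\Delta}u_0$ for all $0 < t < T$, the semigroup being taken in the tempered-distribution sense.

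The remaining step is to identify $u_0$ with the zero distribution. By the mapping properties of the heat semigroup on $\scrS'(\bR^n)$ recalled in the introduction, the map $t \mapsto e^{t\Delta}u_0$ belongs to $C^\infty([0,\infty);\scrS'(\bR^n))$ and takes the value $u_0$ at $t=0$; in particular $e^{t_k\Delta}u_0 \to u_0$ in $\scrS'(\bR^n)$ as $k \to \infty$. On the other hand $e^{t_k\Delta}u_0 = u(t_k) \to 0$ in $\scrS'(\bR^n)$ by hypothesis. Since limits in $\scrS'(\bR^n)$ are unique, $u_0 = 0$, whence $u(t) = e^{t\Delta}0 = 0$ for every $0 < t < T$, i.e. $u = 0$ as an element of $\scrD'((0,T)\times\bR^n)$.

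I do not expect any genuine obstacle here: the work is entirely carried by Theorem \ref{thm:rep}, and the only auxiliary facts needed — that convergence implies boundedness in $\scrS'(\bR^n)$, and that $t \mapsto e^{t\Delta}u_0$ is continuous at $t=0$ with value $u_0$ — are standard. Should one prefer a fully self-contained identification of $u_0$, one can instead pair against a test function and use $\langle e^{t_k\Delta}u_0,\varphi\rangle = \langle u_0, e^{t_k\Delta}\varphi\rangle$ together with $e^{t_k\Delta}\varphi \to \varphi$ in $\scrS(\bR^n)$, but appealing to the already-cited regularity of the distributional heat semigroup is the cleanest route.
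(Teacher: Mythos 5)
Your proposal is correct and is exactly the intended route: the corollary is a direct consequence of Theorem \ref{thm:rep}, since convergence of $(u(t_k))$ to $0$ gives the boundedness hypothesis \eqref{item:cond_initial}, and the representation $u(t)=e^{t\Delta}u_0$ together with $u(t)\to u_0$ in $\scrS'(\bR^n)$ as $t\to 0$ (or, equivalently, the fact that in the theorem's proof $u_0$ arises as a limit of a subsequence of $(u(t_k))$) forces $u_0=0$, hence $u=0$.
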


\begin{remark}
    \label{rem:initial_cond}
    Convergence in $\scrS'(\bR^n)$  cannot be replaced by convergence in $\scrD'(\bR^n)$. The reader can refer to \cite{ChungKim1994_Nonunique-heat} for a non-identically zero solution $u \in C^\infty(\bR^2_+) \cap C([0,\infty) \times \bR)$ with $u(0,x)=0$ everywhere and
    $ |u(t,x)| \le C(\epsilon) e^{\epsilon/t} $
    for any $\epsilon>0$. The continuity implies uniform convergence of $u(t)$ to 0 on compact intervals as $t \to 0$, and hence convergence in distributional sense.
\end{remark} 

\subsection*{Acknowledgement}
The authors were supported by the ANR project RAGE ANR-18-CE40-0012. We would like to thank Ioann Vasilyev for telling us of Gushchin's results and Patrick G\'erard for stimulating discussions.
\section{Proof of the theorem}

Our main lemma asserts the $L^2$-growth on rectangles of caloric function implies internal semigroup representation, which is an interesting fact in its own sake that we did not find in the literature. It can be seen as a particular case of \cite[Thm.~5.1]{Auscher-Monniaux-Portal2019existence} obtained for general parabolic equations with time-dependent, bounded measurable and elliptic coefficients. For the heat equation,  an elementary proof based on Green's formula and basic estimates will be given.

\begin{lemma}
    \label{lemma:homo-id}
    Let $u \in \scrD'((a,b) \times \bR^n)$ be a distributional solution to the heat equation. Suppose that \eqref{e:L2condition} holds. Then when $a<s<t<b$, for any $h \in \Cc(\bR^n)$,
    \begin{equation}
        \int_{\bR^n} u(t,x) h(x) dx = \int_{\bR^n} u(s,x) (e^{(t-s)\Delta} h)(x) dx
        \label{e:homo-id}
    \end{equation}
    where the right-hand integral absolutely converges.
\end{lemma}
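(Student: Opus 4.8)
The plan is to fix $a < s < t < b$ and $h \in \Cc(\bR^n)$, and to apply Green's formula (i.e., integration by parts in space-time) to $u$ and a backward heat kernel against a cutoff. Concretely, let $\varphi_R \in \Cc(\bR^n)$ be a spatial cutoff with $\varphi_R = 1$ on $B(0,R)$, $\supp \varphi_R \subset B(0,2R)$, and $|\nabla^j \varphi_R| \lesssim R^{-j}$. For $\epsilon > 0$ small, consider on the slab $(s,t) \times \bR^n$ the function $v(\tau, x) := (e^{(t-\tau)\Delta} h)(x)$, which solves the backward heat equation $\partial_\tau v + \Delta v = 0$ and is smooth with Gaussian-type decay in $x$ uniformly for $\tau \in [s, t-\epsilon]$ (with a genuine singularity only at $\tau = t$, handled by a limit $\epsilon \to 0$ at the end). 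Pairing the distributional identity $\partial_\tau u - \Delta u = 0$ against $v \varphi_R$ over $(s, t-\epsilon) \times \bR^n$ and integrating by parts (legitimate since $u$ is smooth by hypoellipticity, as noted in the excerpt), the main terms give
\begin{equation*}
    \int_{\bR^n} u(t-\epsilon, x)\, v(t-\epsilon, x) \varphi_R(x)\, dx - \int_{\bR^n} u(s,x)\, v(s,x) \varphi_R(x)\, dx = \mathcal{E}_R,
\end{equation*}
where the error $\mathcal{E}_R$ collects the terms where a derivative falls on $\varphi_R$, namely integrals over the annulus $R \le |x| \le 2R$ of expressions like $u\, v\, \Delta\varphi_R$ and $u\, \nabla v \cdot \nabla \varphi_R$, integrated in $\tau$ over $(s, t-\epsilon)$.

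The key step is to show $\mathcal{E}_R \to 0$ as $R \to \infty$. Here I would use Cauchy–Schwarz in $(\tau, x)$ on the annulus: the $u$-factor is controlled by \eqref{e:L2condition} with the strip $(s, t-\epsilon)$, contributing $C(s,t-\epsilon)\exp(\gamma (2R)^2/(t-\epsilon - s))$ with $\gamma < 1/4$; the $v$-factor times the $\varphi_R$-derivatives must beat this. Since $v(\tau, \cdot) = e^{(t-\tau)\Delta}h$ with $h$ compactly supported and $t - \tau \ge \epsilon$, standard Gaussian bounds give $|v(\tau,x)| + R|\nabla v(\tau,x)| \lesssim \epsilon^{-N} \exp\big(-c|x|^2/(t-\tau)\big) \lesssim \exp\big(-c|x|^2/(t-s)\big)$ on $|x| \ge 2R$ for $R$ large, times the $R^{-2}$ from $\Delta \varphi_R$ (or $R^{-1}\cdot R^{-1}$). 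So on the annulus the product decays like $R^{-2}\exp(-cR^2/(t-s))$; choosing $\epsilon$ small enough that $s$ is close to $a$ — actually one simply needs $\gamma/(t - \epsilon - s) < c/(t-s)$, which holds because one can first shrink the interval or, more cleanly, note $4\gamma < 1$ gives the Gaussian constant $c$ from the heat kernel (which is essentially $1/4$ on the relevant scale) enough room — the exponential gain dominates and $\mathcal{E}_R \to 0$. The main obstacle is precisely this balance of Gaussian constants: one must track that the $\gamma < 1/4$ hypothesis, combined with the $1/(4(t-\tau))$ in the backward kernel and the fact that we may choose $t - \tau$ comparable to $t-s$, leaves a strictly negative net exponent; this is where the sharp constant $1/4$ enters and why the hypothesis is stated with $\gamma < 1/4$ rather than an arbitrary constant.

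Once $\mathcal{E}_R \to 0$, letting $R \to \infty$ and using dominated convergence (justified by the Gaussian decay of $v$ against the local $L^2$ growth of $u$, integrated against the $\varphi_R \nearrow 1$) yields
\begin{equation*}
    \int_{\bR^n} u(t-\epsilon,x)\, (e^{\epsilon \Delta} h)(x)\, dx = \int_{\bR^n} u(s,x)\, (e^{(t-s)\Delta}h)(x)\, dx,
\end{equation*}
with both integrals absolutely convergent. Finally I let $\epsilon \to 0$: on the left, $u(t-\epsilon, \cdot) \to u(t,\cdot)$ uniformly on the fixed compact set $\supp h \supset \supp(e^{\epsilon\Delta}h)$? — not quite, since $e^{\epsilon\Delta}h$ is not compactly supported, so instead I estimate $\int u(t-\epsilon,x)(e^{\epsilon\Delta}h - h)(x)\,dx$ by splitting into a fixed large ball (where $u(t-\epsilon,\cdot)\to u(t,\cdot)$ in $L^2$ by smoothness/continuity in $t$ of the smooth function $u$, and $e^{\epsilon\Delta}h \to h$ in $L^2$) and its complement (where $e^{\epsilon\Delta}h$ has small Gaussian tail controlled against the local $L^2$ bound \eqref{e:L2condition} for the strip near $t$, uniformly in small $\epsilon$). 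This gives \eqref{e:homo-id}. The absolute convergence of the right-hand side is exactly the $R\to\infty$ dominated-convergence estimate already used, namely $\int_{\bR^n} |u(s,x)|\, |e^{(t-s)\Delta}h|(x)\, dx < \infty$ via Cauchy–Schwarz on dyadic annuli with $\gamma < 1/4$ against the Gaussian kernel.
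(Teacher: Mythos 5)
Your overall scheme is the paper's: test $u$ against $v(\tau,x)=(e^{(t-\tau)\Delta}h)(x)$, integrate by parts over the slab, and kill the far-away error terms by playing the size condition against Gaussian decay. (Your cutoff formulation is even a little cleaner in one respect: no $\nabla u$ appears, so you avoid the Caccioppoli estimate and the choice of good radii that the paper needs for its sphere boundary terms; also the $\epsilon$-regularization at $\tau=t$ is unnecessary, since $h\in\Cc(\bR^n)$ makes $v$ smooth up to $\tau=t$.) But there is a genuine quantitative gap exactly at the point you flag as ``the main obstacle''. With a cutoff transitioning from $R$ to $2R$, Cauchy--Schwarz forces you to control the $u$-factor through the ball $B(0,2R)$ (the hypothesis \eqref{e:L2condition} only controls balls), giving $\exp(4\gamma R^2/(b-a))$, while the Gaussian decay of $v$ and $\nabla v$ on the transition annulus is only $\exp(-cR^2/(t-s))$ with $c<1/4$, because the distance from the annulus to $\supp h$ is about $R$, not $2R$. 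You therefore need $4\gamma\,(t-s)/(b-a)<c<1/4$, which fails whenever $\gamma\ge 1/16$ and $t-s$ is comparable to $b-a$; your claimed inequality ``$\gamma/(t-\epsilon-s)<c/(t-s)$'' drops precisely this factor $4=(2R/R)^2$, and neither ``shrinking the interval'' (the pair $s<t$ is given, and iterating in time would require the identity for non-compactly supported test functions) nor ``$4\gamma<1$ gives $c$ enough room'' repairs it. The fix is what the paper does: take annuli/cutoffs of ratio arbitrarily close to $1$ (transition from $\lambda R$ to $R$, or $\kappa^j\rho$ to $\kappa^{j+1}\rho$, with derivative bounds $((1-\lambda)R)^{-j}$ costing only polynomial factors), so the requirement becomes $\gamma<c\lambda^2/\kappa^2$, attainable for every $\gamma<1/4$; and use the denominator $b-a$ furnished by the hypothesis rather than $t-\epsilon-s$. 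The same correction is needed in your dyadic-annuli argument for the absolute convergence of the right-hand side.

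A second, smaller gap: \eqref{e:L2condition} is a space-time $L^2$ bound, so it does not by itself control $\|u(s)\|_{L^2}$ on far-away annuli at a \emph{fixed} time. Your dominated-convergence step for the time-slice terms at $s$ and $t-\epsilon$, and the absolute convergence of $\int|u(s)|\,|e^{(t-s)\Delta}h|$ asserted in the lemma, need exactly such fixed-time control; the paper obtains it from Caccioppoli-type energy estimates, which bound the fixed-time $L^2$ norm on an annulus by the space-time $L^2$ norm on a slightly larger cylinder (local sup bounds for caloric functions would also do). Some ingredient of this kind must be added to your argument; once it is, and the annulus ratio is taken close to $1$ as above, your plan goes through along the same lines as the paper's proof.
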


The identity \eqref{e:homo-id} can  heuristically be called ``\textit{homotopy identity}", as it  formally 
shows $u(t)=e^{(t-s)\Delta} u(s)$ in the sense of distributions. In fact, once \eqref{e:homo-id} is shown, one can extend it to all $h \in \scrS(\bR^n)$, so that this holds in the sense of tempered distributions. 

\begin{proof}
    For $-\infty<\tau\le t$, define 
    \[ \varphi(\tau,x):=(e^{(t-\tau)\Delta} h)(x), \]
    which satisfies $\partial_\tau \varphi + \Delta \varphi = 0$ on $(-\infty,t)\times \bR^n$. Green's formula implies for any $r>0$,
    \begin{equation}
        \int_{B(0,r)} (u \Delta \varphi - \varphi \Delta u)(\tau) = \int_{\partial B(0,r)} (u \nabla \varphi - \varphi \nabla u)(\tau) \cdot \bfn \, d\sigma,
        \label{e:hi-green}
    \end{equation}
    where $\bfn$ is the outer unit normal vector and $d\sigma$ is the sphere volume form. Here and in the sequel, unspecified measures are Lebesgue measures. Newton-Leibniz formula yields
    \begin{equation}
        \int_s^t \int_{B(0,r)} (\varphi \partial_\tau u + u \partial_\tau \varphi) = \int_{B(0,r)} (u(t)\varphi(t)-u(s)\varphi(s)).
        \label{e:hi-NL}
    \end{equation}
    Integrating \eqref{e:hi-green} over $s<\tau<t$ and adding it to \eqref{e:hi-NL}, we have
    \begin{equation}
        \int_{B(0,r)} u(t) \varphi(t) - \int_{B(0,r)} u(s) \varphi(s) = \int_s^t \int_{\partial B(0,r)} (\varphi \nabla u - u \nabla \varphi) d\sigma d\tau.
        \label{e:hi-on-B(0,r)}
    \end{equation}
    Note that $u(t)\varphi(t) \in L^1(\bR^n)$ as $\varphi(t)=h$ has compact support. We claim $u(s)\varphi(s)$ also lies in $L^1(\bR^n)$. Indeed, pick $\rho>0$ such that $\supp(h) \subset B(0,\rho)$. Let $\kappa>1$ be a constant to be determined. Denote by $C_0$ the ball $B(0,\kappa \rho)$ and by $C_j$ the annulus $\{x \in \bR^n:\kappa^j \rho \le |x| < \kappa^{j+1} \rho\}$ for $j\ge 1$. We have
    \begin{equation}
        \int_{\bR^n} |u(s)| |e^{(t-s)\Delta} h|  \le \sum_{j \ge 0} \|u(s)\|_{L^2(C_j)} \| e^{(t-s)\Delta} h \|_{L^2(C_j)}.
        \label{e:hi-uvarphi(s)-L1}
    \end{equation}
    Note that only terms with $j \gg 1$ are of concern as both $u(s)$ and $\varphi(s)$ are bounded on compact sets. Caccioppoli's inequality\footnote{By this we mean the energy estimates obtained by multiplying $u$ with proper cut-off and seeing this as a solution to the heat equation with localised source term.}
    and \eqref{item:cond_size} imply
    \begin{align*}
        \|u(s)\|_{L^2(C_j)}^2 
        &\lesssim_\kappa \left( \frac{1}{(\kappa^{j+1} \rho)^2} + \frac{1}{s-a} \right) \int_a^b \|u(\tau)\|_{L^2(B(x,\kappa^{j+2} \rho))}^2 d\tau \\
        &\lesssim_{\rho,a,b} \frac{1}{s-a} \exp\left( \frac{2\gamma}{b-a} (\kappa^{j+2} \rho)^2 \right).
    \end{align*}
    The heat kernel representation implies
    \[ \|e^{(t-s)\Delta} h\|_{L^2(C_j)} \lesssim \exp\left( -\frac{cd_j^2}{t-s} \right) \|h\|_2 \]
    for $0<c<1/4$ and $d_j := \dist(C_j,\supp(h))$, which asymptotically equals to $\kappa^j \rho$. Pick $c$ close to $1/4$ and $\kappa$ close to 1 so that $\gamma<c/\kappa^4$, and thus for $j \gg 1$,
    \[ \gamma < \frac{c d_j^2}{(\kappa^{j+2} \rho)^2} < \frac{c d_j^2}{(\kappa^{j+2} \rho)^2} \cdot \frac{b-a}{t-s}. \]
    It ensures that the sum in \eqref{e:hi-uvarphi(s)-L1} converges and the claim hence follows.

    Thus, it suffices to prove that there exists an increasing sequence $(r_m)$ tending to $\infty$ such that
    \[ \lim_{m \rightarrow +\infty} \int_s^t \int_{\partial B(0,r_m)} (|\varphi \nabla u| + |u \nabla \varphi|)\,  d\tau d\sigma = 0. \]
    Indeed, suppose so, and then applying \eqref{e:hi-on-B(0,r)} for $(r_m)$ and taking limits on $m$ imply \eqref{e:homo-id} holds. Let us show the existence of such sequence. Let $0<\lambda<1$ be a constant to be determined. Define
    \begin{align*}
        \Phi(R)
        &:=\int_{\lambda R}^R r^{n-1} \int_s^t \int_{\partial B(0,r)} (|\varphi \nabla u| + |u \nabla \varphi|)\, d\tau d\sigma  dr\\
        &= \int_s^t \int_{\lambda R<|x|<R} (|\varphi \nabla u| + |u \nabla \varphi|)\, d\tau dx,
    \end{align*}
    and denote by $\Phi_i(R)$ the $i$-th term for $i=1,2$. Then, it is enough to show that $\Phi(R)$ is bounded. For $\Phi_1(R)$, Caccioppoli's inequality, kernel representation of the heat semigroup, and \eqref{item:cond_size} altogether imply
    \begin{align*}
        \Phi_1(R) 
        &\le \left( \int_s^t \int_{\lambda R<|x|<R} |\nabla u|^2 \right)^{1/2} \left( \int_s^t \int_{\lambda R<|x|<R} |\varphi|^2 \right)^{1/2} \\
        &\lesssim_\kappa \left [ \frac{1}{s-a} \left ( 1+\frac{t-a}{R^2} \right ) \int_a^b \int_{|x|<\kappa R} |u|^2 \right ]^{1/2} \left( \int_s^t e^{-\frac{2cd^2}{t-\tau}} d\tau \right )^{1/2} \|h\|_2 \\
        &\lesssim_{a,b,\kappa} \left( \frac{t-s}{s-a} \right)^{1/2} \exp \left( -\frac{cd^2}{t-s}+\frac{\gamma (\kappa R)^2}{b-a} \right) \|h\|_2
    \end{align*}
    for sufficiently large $R$. Here, $\kappa>1$ and $0<c<1/4$ are constants to be determined, and $d:=\dist(\supp(h),\{\lambda R<|x|<R\})$. Then, pick $\kappa$ close to 1, $\lambda$ close to 1, and $c$ close to $1/4$ so that $\gamma<c\lambda^2 /\kappa^2$, and thus
    \begin{equation}
        \gamma < \frac{cd^2}{(\kappa R)^2} < \frac{cd^2}{(\kappa R)^2} \cdot \frac{b-a}{t-s}
        \label{e:hi_cond_gamma}
    \end{equation}
    for sufficiently large $R$. We hence conclude that $\Phi_1(R)$ is bounded.

    For $\Phi_2(R)$, the kernel representation of $(t^{1/2} \nabla e^{t\Delta})_{t>0}$ and \eqref{item:cond_size} yield
    \begin{align*}
        \Phi_2(R) 
        &\le \left( \int_a^b \int_{\lambda R<|x|<R} |u|^2 \right)^{1/2} \left( \int_s^t \int_{\lambda R<|x|<R} |\nabla \varphi|^2 \right)^{1/2} \\
        &\lesssim e^{\frac{\gamma R^2}{b-a}} \left( \int_s^t e^{-\frac{2cd^2}{t-\tau}} \frac{d\tau}{t-\tau} \right)^{1/2} \|h\|_2 \\
        &\lesssim \frac{(t-s)^{1/2}}{c^{1/2} d} \exp \left( -\frac{cd^2}{t-s}+\frac{\gamma R^2}{b-a} \right) \|h\|_2.
    \end{align*}
    The same choice for $\lambda$ and $c$ as above implies \eqref{e:hi_cond_gamma}, and hence the boundedness of $\Phi_2(R)$. This completes the proof.
\end{proof}

Let us recall some topological facts about tempered distributions.

\begin{lemma}
    \label{lemma:BS_Frechet}
    Let $X$ be a Fr\'echet space and $Y$ be a normed space. Let $I$ be an index set and $\{\psi_\alpha\}_{\alpha \in I}$ be a collection of continuous linear maps from $X$ to $Y$. If $\sup_{\alpha \in I} \|\psi_\alpha(x)\|_Y$ is bounded for any $x \in X$, then the family $\{\psi_\alpha\}_{\alpha \in I}$ is equicontinuous.
\end{lemma}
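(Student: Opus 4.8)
The plan is to run the classical Baire category argument underlying the Banach--Steinhaus theorem, the only structural input being that a Fréchet space, being completely metrizable, is a Baire space. First, for each integer $m \ge 1$ set
\[
E_m := \left\{ x \in X : \sup_{\alpha \in I} \|\psi_\alpha(x)\|_Y \le m \right\} = \bigcap_{\alpha \in I} \psi_\alpha^{-1}\big( \overline{B_Y(0,m)} \big).
\]
Since each $\psi_\alpha$ is continuous and the closed ball $\overline{B_Y(0,m)}$ is closed in $Y$, each $E_m$ is closed in $X$; and the pointwise boundedness hypothesis is precisely the statement that $X = \bigcup_{m \ge 1} E_m$.

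By the Baire category theorem applied to the complete metric space $X$, some $E_N$ has nonempty interior, so there exist $x_0 \in X$ and a neighborhood $V$ of $0$ in $X$ with $x_0 + V \subseteq E_N$. Since $0 \in V$ we have $x_0 \in E_N$, hence $\|\psi_\alpha(x_0)\|_Y \le N$ for all $\alpha$; and for arbitrary $x \in V$ and $\alpha \in I$, linearity of $\psi_\alpha$ gives
\[
\|\psi_\alpha(x)\|_Y \le \|\psi_\alpha(x_0 + x)\|_Y + \|\psi_\alpha(x_0)\|_Y \le 2N,
\]
so that $\sup_{\alpha \in I}\|\psi_\alpha(x)\|_Y \le 2N$ holds uniformly for $x \in V$.

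Finally, given $\epsilon > 0$, the set $\tfrac{\epsilon}{2N}\,V$ is again a neighborhood of $0$ in $X$ (scalar multiplication by a nonzero scalar being a homeomorphism), and by homogeneity of each $\psi_\alpha$ it is mapped into $B_Y(0,\epsilon)$ by every $\psi_\alpha$. Since every neighborhood of $0$ in the normed space $Y$ contains such a ball, this yields equicontinuity of $\{\psi_\alpha\}_{\alpha \in I}$ at $0$, and then, using $\psi_\alpha(x) - \psi_\alpha(x') = \psi_\alpha(x - x')$, at every point. I do not anticipate a genuine obstacle: the argument is entirely standard, and the only points deserving a word of justification are that a Fréchet space is a Baire space and the routine topological-vector-space manipulations (producing the $0$-neighborhood $V$ from the interior point, rescaling $0$-neighborhoods); in particular, completeness of $Y$ is never used.
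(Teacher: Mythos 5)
Your argument is correct. The only structural facts you use are that a Fr\'echet space is completely metrizable (hence Baire) and elementary topological-vector-space manipulations, and each step checks out: the sets $E_m$ are closed by continuity of the $\psi_\alpha$, they cover $X$ by pointwise boundedness, Baire gives an interior point $x_0+V\subseteq E_N$, the triangle inequality transfers the bound to the $0$-neighborhood $V$, and rescaling plus linearity gives equicontinuity at $0$ and then everywhere. You are also right that completeness of $Y$ plays no role.

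Your route differs from the paper's only in packaging: the paper does not reprove anything, it simply invokes the generalized Banach--Steinhaus theorem for barrelled spaces (citing Bourbaki) together with the fact that Fr\'echet spaces are barrelled, whereas you run the underlying Baire category argument directly. What the paper's citation buys is brevity and generality --- the barrelled-space statement covers non-metrizable locally convex spaces as well, and equicontinuity there comes out for maps into arbitrary locally convex targets. What your direct proof buys is self-containedness and slightly weaker hypotheses: you never need local convexity of $X$ (the argument works verbatim for any complete metrizable topological vector space), and the proof makes transparent exactly where metrizability and completeness enter. Since the standard proof that Fr\'echet spaces are barrelled is itself a Baire argument, your proof is essentially the unwound version of the one the paper cites; both are standard and both are fine here.
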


\begin{proof}
    It is a direct consequence of a generalized version of Banach-Steinhaus theorem on barrelled spaces, due to the fact that Fr\'echet spaces are barrelled spaces, see \cite[\S III.4]{Bourbaki2003TVS}.
\end{proof}

One can easily obtain two corollaries. The pairings below are all understood in the sense of tempered distributions.

\begin{coro}
    \label{cor:cv_pair_S}
    Let $(\varphi_k)$ be a sequence converging to $\varphi$ in $\scrS(\bR^n)$ and $(u_k)$ be a sequence converging to $u$ in $\scrS'(\bR^n)$. Then $(\langle u_k,\varphi_k \rangle)$ converges to $\langle u,\varphi \rangle$.
\end{coro}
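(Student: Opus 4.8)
The plan is to reduce the statement to the equicontinuity furnished by Lemma~\ref{lemma:BS_Frechet}. First I would perform the standard splitting
\[
\langle u_k,\varphi_k\rangle - \langle u,\varphi\rangle = \langle u_k,\varphi_k-\varphi\rangle + \langle u_k-u,\varphi\rangle,
\]
and note that the second term tends to $0$ immediately, since convergence in $\scrS'(\bR^n)$ means precisely that $\langle u_k,\psi\rangle \to \langle u,\psi\rangle$ for every fixed $\psi \in \scrS(\bR^n)$, in particular for $\psi = \varphi$. Thus the whole matter comes down to controlling the first term uniformly in $k$.

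For that I would apply Lemma~\ref{lemma:BS_Frechet} with $X = \scrS(\bR^n)$ (a Fréchet space), $Y = \bC$ (a normed space), and the family $\{u_k\}_k$ of continuous linear functionals on $\scrS(\bR^n)$. The hypothesis of the lemma is satisfied because, for each fixed $\psi \in \scrS(\bR^n)$, the sequence $(\langle u_k,\psi\rangle)$ converges in $\bC$ and is therefore bounded, so $\sup_k |\langle u_k,\psi\rangle| < \infty$. The lemma then gives equicontinuity of $\{u_k\}_k$, which I would restate as the existence of a continuous seminorm $p$ on $\scrS(\bR^n)$ and a constant $C > 0$, both independent of $k$, such that $|\langle u_k,\psi\rangle| \le C\,p(\psi)$ for all $k$ and all $\psi \in \scrS(\bR^n)$.

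Taking $\psi = \varphi_k - \varphi$ then yields $|\langle u_k,\varphi_k-\varphi\rangle| \le C\,p(\varphi_k-\varphi)$, and the right-hand side tends to $0$ because $\varphi_k \to \varphi$ in $\scrS(\bR^n)$ and $p$ is continuous for the Schwartz topology. Combining this with the vanishing of $\langle u_k-u,\varphi\rangle$ completes the argument.

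The only point requiring slight care — and it is a minor one — is phrasing the Banach–Steinhaus input correctly: pointwise boundedness is weaker than convergence, but it is exactly what Lemma~\ref{lemma:BS_Frechet} needs, and one must translate the abstract equicontinuity into the single-seminorm estimate above. No genuine obstacle arises.
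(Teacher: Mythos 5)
Your argument is correct and is exactly the route the paper intends: the paper derives this corollary from Lemma~\ref{lemma:BS_Frechet} (Banach--Steinhaus on the Fr\'echet space $\scrS(\bR^n)$), and your splitting plus the translation of equicontinuity into a uniform seminorm bound $|\langle u_k,\psi\rangle|\le C\,p(\psi)$ is the standard way to carry that out. No gaps.
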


\begin{coro}
    \label{cor:compact_S'}
    Any bounded sequence in $\scrS'(\bR^n)$ has a convergent subsequence.
\end{coro}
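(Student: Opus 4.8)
The plan is to combine equicontinuity, coming from Lemma~\ref{lemma:BS_Frechet}, with the separability of $\scrS(\bR^n)$ and a diagonal extraction. Let $(u_k)$ be a bounded sequence in $\scrS'(\bR^n)$. By definition this means $\sup_k |\langle u_k,\varphi\rangle|<\infty$ for every $\varphi\in\scrS(\bR^n)$, so Lemma~\ref{lemma:BS_Frechet}, applied with $X=\scrS(\bR^n)$, $Y=\bC$, and $\psi_k=u_k$ (each a continuous linear functional), shows that the family $(u_k)$ is equicontinuous.

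First I would fix a countable dense subset $\{\varphi_j\}_{j\ge 1}$ of $\scrS(\bR^n)$, which exists because $\scrS(\bR^n)$ is separable (for instance it embeds continuously and densely into $L^2(\bR^n)$, or one may argue directly via Hermite functions). For each fixed $j$, the scalar sequence $(\langle u_k,\varphi_j\rangle)_k$ is bounded in $\bC$, hence has a convergent subsequence by Bolzano--Weierstrass. A standard diagonal argument then produces a single subsequence $(u_{k_m})_m$ such that $\lim_m \langle u_{k_m},\varphi_j\rangle$ exists for every $j$.

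Next I would upgrade this to convergence tested against all of $\scrS(\bR^n)$, using equicontinuity. Given $\varphi\in\scrS(\bR^n)$ and $\varepsilon>0$, equicontinuity yields a neighborhood $V$ of $0$ in $\scrS(\bR^n)$ with $|\langle u_{k_m},\psi\rangle|<\varepsilon$ for all $\psi\in V$ and all $m$; choosing $j$ with $\varphi-\varphi_j\in V$ and splitting $\langle u_{k_m},\varphi\rangle - \langle u_{k_{m'}},\varphi\rangle$ through $\varphi_j$, one sees that $(\langle u_{k_m},\varphi\rangle)_m$ is Cauchy, hence convergent. Write $\langle u,\varphi\rangle$ for its limit. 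Linearity in $\varphi$ is immediate, and continuity of $u$ follows either from the general fact that a pointwise limit of an equicontinuous family of linear maps is continuous, or simply by reapplying Lemma~\ref{lemma:BS_Frechet} to $(u_{k_m})$. Therefore $u\in\scrS'(\bR^n)$ and $u_{k_m}\to u$ in $\scrS'(\bR^n)$.

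The argument is entirely routine; the only points needing any care are the separability of $\scrS(\bR^n)$ and the verification that the limit functional is genuinely a tempered distribution, both of which are handled above. I do not anticipate a substantive obstacle. (One could alternatively phrase this by noting that a bounded, equicontinuous subset of $\scrS'(\bR^n)$ is weak-$\ast$ metrizable because $\scrS(\bR^n)$ is separable, and invoke a Banach--Alaoglu-type compactness statement, but the diagonal extraction is more self-contained.)
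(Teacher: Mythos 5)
Your argument is correct and follows the route the paper intends: the paper states this as an easy consequence of Lemma~\ref{lemma:BS_Frechet}, and your proof fills in exactly the standard details (equicontinuity from the lemma, separability of $\scrS(\bR^n)$, diagonal extraction, then equicontinuity again to get convergence on all test functions and continuity of the limit). No gaps.
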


Let us provide the proof of Theorem \ref{thm:rep}.
\begin{proof}[Proof of Theorem \ref{thm:rep}]
    Given $0<s<t<T$, pick $a,b$ so that $0<a<s<t<b<T$. Applying
    Lemma \ref{lemma:homo-id}, we get for any $h \in \Cc(\bR^n)$,
    \begin{equation}
        \int_{\bR^n} u(t,x) h(x) dx = \int_{\bR^n} u(s,x) (e^{(t-s)\Delta} h)(x) dx.
        \label{e:hi}
    \end{equation}
    Moreover, Corollary \ref{cor:compact_S'} implies there is a subsequence $(t_{k_j})$ so that $(u(t_{k_j}))$ converges to some $u_0$ in $\scrS'(\bR^n)$ as $j\to \infty$. One can also easily verify that $e^{(t-s)\Delta} h$ converges to $e^{t\Delta} h$ in $\scrS(\bR^n)$ as $s \to 0$. Thus, we infer from Corollary \ref{cor:cv_pair_S} that
    \[ \lim_{j \rightarrow \infty} \left \langle u(t_{k_j}), e^{(t-t_{k_j})\Delta} h \right \rangle = \left \langle u_0,e^{t\Delta} h \right \rangle. \]
    Applying \eqref{e:hi} for $s=t_{k_j}$ and taking limits on $j$ yield $u(t)=e^{t\Delta} u_0$ in $\scrD'(\bR^n)$. As the right-hand side belongs to  $\scrS'(\bR^n)$, so does $u(t)$ for all $0<t<T$. In particular, it implies $u(t)$ converges to $u_0$ in $\scrS'(\bR^n)$ as $t \to 0$, so $u_0$ is unique. This completes the proof.
\end{proof}
\section{Applications}
\label{sec:applications}

Our statement covers many functional spaces of common use in analysis, even when one expects that the initial data is a distribution. To keep this note short, let us illustrate the result with an example related to the famous work of H. Koch and D. Tataru on Navier-Stokes equations \cite{Koch-Tataru2001NS}. More applications will be studied in forthcoming works. Define the \textit{tent space} $T^\infty$ as the collection of measurable functions $u$ for which
\[ \|u\|_{T^\infty} := \sup_B \left( \frac{1}{|B|}\int_0^{r(B)^2} \int_B |u(t,y)|^2 dtdy \right)^{1/2} < \infty, \]
where $B$ describes  balls in $\bR^n$ and $|B|$ is the Lebesgue measure of $B$. Let $\bmo^{-1}$ be the collection of distributions $f \in \scrD'(\bR^n)$ with $f=\Div g$ for some $g \in \bmo(\bR^n;\bC^n)$. The space $\bmo^{-1}$ can be embedded into $\scrS'(\bR^n)$, and identifies with the homogeneous Triebel-Lizorkin space $\DotF^{-1}_{\infty,2}$, with equivalent norms, see, \textit{e.g.}, \cite[\S 5.1]{Triebel1983_Space}. Moreover, a well-known characterisation of $\bmo^{-1}$ is that 
\begin{equation}
    \|f\|_{\bmo^{-1}} \eqsim \|e^{t\Delta} f\|_{T^\infty}.
    \label{e:bmo-1_char}
\end{equation}
In particular, a tempered distribution $f$ lies in $\bmo^{-1}$ if $(e^{t\Delta} f)(x)$ lies in $T^\infty$. Remark that if $f$ belongs to $\bmo^{-1}$,  as a function of $t\in [0,\infty)$, $e^{t\Delta} f$ is continuous in $\bmo^{-1}$ equipped with its weak star topology (or equipped with the topology inherited from that of $\scrS'(\bR^n)$, by density).

A natural question is whether all $T^\infty$ functions solving the heat equation are of that form. We answer it in the affirmative.
\begin{theorem}
    \label{thm:BMO-1}
    Given a global distributional solution to the heat equation $u \in T^\infty$, there exists a unique $u_0 \in \bmo^{-1}$ so that $u(t)=e^{t\Delta} u_0$ for any $t>0$.
\end{theorem}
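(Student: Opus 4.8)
The plan is to reduce Theorem~\ref{thm:BMO-1} to Theorem~\ref{thm:rep} together with the characterization \eqref{e:bmo-1_char} of $\bmo^{-1}$. Concretely, I would (1) verify that a heat solution $u \in T^\infty$ satisfies hypotheses \eqref{item:cond_size} and \eqref{item:cond_initial}, so that Theorem~\ref{thm:rep} applies and yields some $u_0 \in \scrS'(\bR^n)$ with $u(t) = e^{t\Delta}u_0$; and (2) upgrade $u_0$ from $\scrS'(\bR^n)$ to $\bmo^{-1}$ using \eqref{e:bmo-1_char}.

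\emph{Size condition.} Fix $0<a<b<\infty$ and $R>0$. Cover $B(0,R)$ by $O\bigl(1+(R/\sqrt b)^n\bigr)$ balls $B_i$ of radius $\sqrt b$. Since $r(B_i)^2=b$, the definition of $\|u\|_{T^\infty}$ gives $\int_0^b\int_{B_i}|u|^2 \le |B_i|\,\|u\|_{T^\infty}^2$ for each $i$, and summing (and dropping $\int_a^b\le\int_0^b$) yields $\int_a^b\int_{B(0,R)}|u|^2 \lesssim (b^{n/2}+R^n)\,\|u\|_{T^\infty}^2$. In particular the left-hand side grows at most polynomially in $R$, hence is $\le C(b)\exp\bigl(\gamma R^2/(b-a)\bigr)$ for any $\gamma\in(0,1/4)$; so \eqref{item:cond_size} holds, with room to spare.

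\emph{Uniform control — the main point.} I would in fact establish the stronger statement that $u(t)\in\bmo^{-1}$ with $\|u(t)\|_{\bmo^{-1}}\lesssim\|u\|_{T^\infty}$ \emph{uniformly} in $t>0$. The two ingredients are: (a) a pointwise bound. Since $|u|$ is a subsolution of the heat equation, the local maximum estimate on the parabolic cylinder $(3t/4,t)\times B(x_0,\tfrac12\sqrt t)$ bounds $|u(t,x_0)|$ by the $L^2$-average of $u$ there, and the $T^\infty$ bound $\int_0^t\int_{B(x_0,\sqrt t)}|u|^2\le c_n t^{n/2}\|u\|_{T^\infty}^2$ then gives $\|u(t)\|_{L^\infty(\bR^n)}\lesssim t^{-1/2}\|u\|_{T^\infty}$; in particular $u(t)\in L^\infty(\bR^n)\subset\scrS'(\bR^n)$ for each $t>0$. (b) Knowing $u(t)\in\scrS'(\bR^n)$, the homotopy identity of Lemma~\ref{lemma:homo-id}, first for $h\in\Cc(\bR^n)$ and then extended to $h\in\scrS(\bR^n)$ by density (both sides being $\scrS'$-continuous in $h$), shows $e^{\sigma\Delta}u(t)=u(t+\sigma)$ in $\scrS'(\bR^n)$ for $\sigma>0$. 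Plugging this into \eqref{e:bmo-1_char} and changing variables in the tent-space time integral gives $\|u(t)\|_{\bmo^{-1}}\eqsim\sup_B\bigl(\tfrac1{|B|}\int_t^{t+r(B)^2}\int_B|u|^2\bigr)^{1/2}$. I would bound the right-hand side by $\lesssim\|u\|_{T^\infty}$ in two cases: if $r(B)\ge\sqrt t$, enlarge $B$ to the ball of radius $(t+r(B)^2)^{1/2}\le\sqrt2\,r(B)$ and use the $T^\infty$ bound directly; if $r(B)<\sqrt t$, then $t+r(B)^2\le2t$ and $\int_t^{t+r(B)^2}\int_B|u|^2\le r(B)^2|B|\,\|u\|_{L^\infty((t,2t)\times\bR^n)}^2\lesssim|B|\,\|u\|_{T^\infty}^2$ by (a) and $r(B)^2<t$. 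Hence $u(t_k)$ is bounded in $\bmo^{-1}\hookrightarrow\scrS'(\bR^n)$ for any $t_k\to0$, i.e.\ \eqref{item:cond_initial} holds.

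\emph{Conclusion.} Theorem~\ref{thm:rep} now gives a unique $u_0\in\scrS'(\bR^n)$ with $u(t)=e^{t\Delta}u_0$ for all $t>0$. Since by hypothesis the function $(t,x)\mapsto(e^{t\Delta}u_0)(x)=u(t,x)$ lies in $T^\infty$, the characterization \eqref{e:bmo-1_char} forces $u_0\in\bmo^{-1}$ (with $\|u_0\|_{\bmo^{-1}}\eqsim\|u\|_{T^\infty}$), and uniqueness is inherited from Theorem~\ref{thm:rep}. The only genuinely substantive step is the uniform $\bmo^{-1}$ bound as $t\to0$, and within it the small-ball case, which truly requires the pointwise parabolic regularity estimate in (a) rather than the $T^\infty$ norm alone.
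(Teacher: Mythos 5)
Your proposal is correct, and it reaches the same reduction (verify \eqref{item:cond_size} and \eqref{item:cond_initial}, apply Theorem \ref{thm:rep}, then upgrade $u_0$ to $\bmo^{-1}$ via \eqref{e:bmo-1_char}), but the way you verify the uniform control \eqref{item:cond_initial} is genuinely different from the paper. The paper stays entirely at the level of duality with test functions: using $\partial_t\langle u(t),\varphi\rangle=\langle u(t),\Delta\varphi\rangle$, a dyadic decomposition in space, and Cauchy--Schwarz against the $T^\infty$ norm on the balls $B(0,2^k)$ over the time interval $(0,1)$, it gets $|\langle u(t'),\varphi\rangle-\langle u(t),\varphi\rangle|\lesssim \cP_{n+3}(\varphi)\|u\|_{T^\infty}$ and then averages in $t'\in(1/2,1)$; this yields exactly the $\scrS'$-boundedness required by Theorem \ref{thm:rep}, using nothing beyond smoothness of $u$. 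You instead prove the stronger quantitative statement $\sup_{t>0}\|u(t)\|_{\bmo^{-1}}\lesssim\|u\|_{T^\infty}$, which costs two extra ingredients: a parabolic local boundedness (mean-value/Moser) estimate giving $\|u(t)\|_{L^\infty}\lesssim t^{-1/2}\|u\|_{T^\infty}$, and an early use of the homotopy identity of Lemma \ref{lemma:homo-id} (extended to Schwartz $h$, which you legitimately justify via $u(t)\in L^\infty$) to identify $e^{\sigma\Delta}u(t)$ with $u(t+\sigma)$ before invoking \eqref{e:bmo-1_char}; your two-case ball analysis is then correct, and the small-ball case indeed genuinely needs the pointwise estimate. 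What your route buys is the uniform $\bmo^{-1}$ bound on $u(t)$ as an explicit byproduct (and hence $\|u_0\|_{\bmo^{-1}}\lesssim\|u\|_{T^\infty}$ directly); what the paper's route buys is a softer, more elementary argument that avoids interior regularity theory and any use of the semigroup identity prior to Theorem \ref{thm:rep}, and which transfers more readily to equations where pointwise bounds are unavailable. Two small points you should make explicit if you write this up: passing from ``bounded in $\bmo^{-1}$'' to ``bounded in $\scrS'(\bR^n)$'' uses the continuity of the embedding $\bmo^{-1}\hookrightarrow\scrS'(\bR^n)$, and the identification of the distribution $e^{\sigma\Delta}u(t)$ with the measurable function $u(t+\sigma,\cdot)$ entering the $T^\infty$ norm uses smoothness of $u$ (hypoellipticity), both of which the paper's framework provides.
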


\begin{proof} 
    It suffices to verify the two conditions in Theorem \ref{thm:rep}. Indeed, it shows that there exists a unique $u_0 \in \scrS'(\bR^n)$ so that $u(t)=e^{t\Delta} u_0$ for any $t>0$. We then get $u_0 \in \bmo^{-1}$ by \eqref{e:bmo-1_char} since $ (e^{t\Delta} u_0)(x)=u(t,x)$ belongs to $T^\infty$.

    Let us verify the conditions. First, \eqref{item:cond_size} readily follows as for $0<a<b<\infty$,
    \begin{equation}
        F(x) := \left( \int_a^b \int_{B(x,b^{1/2})} |u(t,y)|^2 dtdy \right)^{1/2}
        \label{e:F-in-Linfty}
    \end{equation}
    satisfies $\|F\|_{L^\infty(\bR^n)} \lesssim \|u\|_{T^\infty}$. 
    
    Next, we claim  that there exists $M>0$ so that for any $\varphi \in C_c^\infty(\bR^n)$, 
    \begin{equation}
        \sup_{0<t<1/2} |\langle u(t),\varphi \rangle| \lesssim \cP_M(\varphi) \|u\|_{T^\infty},
        \label{e:bdd_uS'_by_uT}
    \end{equation}
    where $\cP_M$ is the semi-norm given by
    \[ \cP_M(\varphi) := \sup_{|\alpha|+|\beta| \le M} \sup_{x \in \bR^n} |x^\alpha \partial^\beta \varphi(x)|. \]
    For fixed $0<t<1/2$, standard considerations allow one to extend $u(t)$  to a tempered distribution so that \eqref{e:bdd_uS'_by_uT} holds 
    for all $\varphi \in \scrS(\bR^n)$, which proves \eqref{item:cond_initial}. As for the claim, fix $0<t<1/2<t'<1$ and let $\varphi \in C_c^\infty(\bR^n)$. Using the equation for $u$ and integration by parts,
    \begin{align*}
    |\langle u(t'),\varphi \rangle - \langle u(t),\varphi \rangle|
     &\le \int_t^{t'} \int_{\bR^n} |u(s,x)| |\Delta \varphi(x)| dsdx \\ 
     &= \int_t^{t'} \int_{B(0,1)} |u(s,x)| |\Delta \varphi(x)| dsdx \\ 
     &\quad + \sum_{k=1}^\infty \int_t^{t'} \int_{2^{k-1} \le |x| < 2^k} |u(s,x)| |\Delta \varphi(x)| dsdx.
    \end{align*}
    Denote by $I_0$ the first term and $I_k$ the $k$-th term in the summation. Cauchy-Schwarz inequality yields
    \begin{align*}
    I_0
     &\le |B(0,1)| (\sup_{|x|<1} |\Delta \varphi(x)|) \left ( \frac{1}{|B(0,1)|}\int_0^1 \int_{B(0,1)} |u(s,x)|^2 dsdx \right )^{1/2} \\ 
     &\lesssim_n \cP_2(\varphi) \|u\|_{T^\infty}.
    \end{align*}
    Similarly, we also have that
    \begin{align*}
    I_k
     &\lesssim_n 2^{kn} (\sup_{2^{k-1} \le |x| < 2^k} |\Delta \varphi(x)|) \left ( \frac{1}{|B(0,2^k)|}\int_0^1 \int_{B(0,2^k)} |u(s,x)|^2 dsdx \right )^{1/2} \\ 
     &\lesssim_n 2^{-k} ( \sup_{x \in \bR^n} |x|^{n+1} |\Delta \varphi(x)| ) \|u\|_{T^\infty} \le 2^{-k} \cP_{n+3}(\varphi) \|u\|_{T^\infty}.
    \end{align*}
    We thus obtain 
    \[ |\langle u(t'),\varphi \rangle - \langle u(t),\varphi \rangle| \lesssim_n \cP_{n+3}(\varphi) \|u\|_{T^\infty}. \]
    Taking average in  $t'\in (1/2,1)$ implies
    \[ |\langle u(t),\varphi \rangle| \lesssim_n \int_{1/2}^1 |\langle u(t'),\varphi \rangle| dt' + \cP_{n+3}(\varphi) \|u\|_{T^\infty}. \]
    The same argument as above yields
    \[ \int_{1/2}^1 |\langle u(t'),\varphi \rangle| dt' \le \int_{1/2}^1 \int_{\bR^n} |u(t',x)| |\varphi(x)| dt'dx \lesssim_n \cP_{n+1}(\varphi) \|u\|_{T^\infty}. \]
    This completes the proof. 
\end{proof}

\bibliographystyle{alpha}
\bibliography{sample}

\end{document}